\documentclass[10pt, reqno]{amsart}
\usepackage{lipsum}
\usepackage{a4wide}
\usepackage{amssymb}
\usepackage{amsmath}
\usepackage{amsthm}
\usepackage{tikz}
\usepackage{amscd}
\usepackage{hyperref}
\usepackage[all]{xy}
\usepackage{enumitem}
\hypersetup{colorlinks,linkcolor={blue},citecolor={blue},urlcolor={red}}
\usepackage{color}
\theoremstyle{plain}
\usepackage[margin=2.4cm]{geometry}
\numberwithin{equation}{section}

\newtheorem{theorem}{Theorem}[section]
\newtheorem{corollary}[theorem]{Corollary}
\newtheorem{lemma}[theorem]{Lemma}

\newtheorem{proposition}[theorem]{Proposition}

\title{On Tate cohomology and base change of representations of $D^\times$}
\author{Sabyasachi Dhar}
\begin{document}
	\maketitle
\begin{abstract}
Let $F$ be a non-Archimedean local field with residue characteristic $p$, and let $D$ be a central $F$-division algebra of degree $d$. Let $E/F$ be a finite Galois extension of prime degree $\ell$, where $\ell \ne p$ and $\ell$ does not divide $d$. Set $D_E=D\otimes_F E$. Let $\mathcal{K}$ be the maximal unramified extension of $\mathbb{Q}_\ell$ in $\overline{\mathbb{Q}}_\ell$. In this article, we explicitly compute the Tate cohomology groups of absolutely irreducible, integral, depth-zero, $\mathcal{K}$-representations of $D_E^\times$ in the context of local base change lifting, which verifies a conjecture of Treumann--Venkatesh on mod-$\ell$ functoriality.  
\end{abstract}
\section{Introduction}
Let $F$ be a non-Archimedean local field with residue characteristic $p$, and let $D$ be a central $F$-division algebra of index $n$. Let $\ell$ be a prime distinct from $p$.
The Jacquet-Langlands correspondence provides an injective map
$$ {\rm JL}_F : {\rm Irr}_{\overline{\mathbb{Q}}_\ell}(D^\times)\rightarrow {\rm Irr}_{\overline{\mathbb{Q}}_\ell}({\rm GL}_n(F)) $$
where ${\rm Irr}_{\overline{\mathbb{Q}}_\ell}(G)$, for a locally profinite group $G$, denotes the set of isomorphism classes of irreducible smooth $\ell$-adic representations of $G$. Let $E/F$ be a Galois extension of degree $\ell$. The theory of base change lifting provides a map
$$ {\rm BC}_{E/F} : {\rm Irr}_{\overline{\mathbb{Q}}_\ell}({\rm GL}_n(F))\rightarrow {\rm Irr}_{\overline{\mathbb{Q}}_\ell}({\rm GL}_n(E)). $$
Let $D_E$ be the central simple $E$-algebra $D\otimes_F E$. Assume that $\ell$ does not divide $n$. Then $D_E$ is a division algebra over $E$, and we obtain a base change lifting map
$$ {\rm bc}_{E/F}:{\rm Irr}_{\overline{\mathbb{Q}}_\ell}(D^\times)\rightarrow {\rm Irr}_{\overline{\mathbb{Q}}_\ell}(D_E^\times)$$
induced by ${\rm BC}_{E/F}$ and the Jacquet-Langlands correspondence. For an irreducible smooth depth-zero $\ell$-adic representation $\pi$ of $D^\times$, the representation $\Pi={\rm bc}_{E/F}(\pi)$ is of depth-zero and is stable under the action of ${\rm Gal}(E/F)$. In this article, we explicitly compute the Tate cohomology groups $\widehat{\rm H}^i({\rm Gal}(E/F),\mathcal{L})$ of certain $D_E^\times\rtimes {\rm Gal}(E/F)$-stable lattice $\mathcal{L}$ in $\Pi$, using the compact induction model of $\Pi$ in terms of admissible tame pairs.

The purpose of this paper is motivated by a conjecture of Treumann--Venkatesh on mod-$\ell$ functoriality (\cite[Conjecture 6.3]{TV_paper}), which proposed that one of the Jordan-Hölder factors of the $\overline{\mathbb{F}}_\ell$-representations $\widehat{\rm H}^i({\rm Gal}(E/F),\mathcal{L})$ is the mod-$\ell$ reduction of $\pi$ twisted by the Frobenius automorphism of the coefficient field $\overline{\mathbb{F}}_\ell$. During several years, there has been considerable progress towards Treumann--Venkatesh's conjecture. After an initial evidence in the context of local base change lifting for depth-zero cuspidal representations of ${\rm GL}_n(F)$ with $\ell\nmid n$ due to Ronchetti (\cite{Ronchetti_BC}), Feng (\cite{feng2023modular}, \cite{feng2024smith}) made remarkable progress towards these conjectures--using the recent advances due to the works of V. Lafforgue and Fargues-Scholze. However, the
explicit computation of Tate cohomology groups, in particular, the non-vanishing of Tate cohomology and genericity, are not well
understood. Some of these questions were answered in \cite{nadimpalli2024tate} for ${\rm GL}_n(F)$ and in \cite{dhar2025jacquet} for $F$-rational points of general connected reductive groups defined over $F$, where Treumann--Venkatesh's conjecture was proved for integral generic representations of ${\rm GL}_n(F)$ in the context of local base change lifting. The main result of this article proves the conjecture for irreducible depth-zero integral representations $D^\times$, which shows that Tate cohomology realizes the base change lifting ${\rm bc}_{E/F}$.

We now introduce some notation to state the main result. Let $\overline{\mathbb{Q}}_\ell$ be an algebraic closure of $\mathbb{Q}_\ell$. Let $\mathcal{K}$ be the maximal unramified extension of $\mathbb{Q}_\ell$ in $\overline{\mathbb{Q}}_\ell$. Let $\pi$ be an absolutely irreducible smooth depth-zero $\mathcal{K}$-representation of $D^\times$. Such $\pi$ is parametrized by a tame admissible pair $(K/F,\theta)$ (\cite[Subsection 1.5]{BH_depth_zero}), where $K$ is a finite unramified extension of $F$ and $\theta$ is a smooth character of $K^\times$, trivial on the first congruence subgroup $U_K^1$ of $K^\times$. Let $\Pi$ be the absolutely irreducible smooth depth-zero $\mathcal{K}$-representation of $D_E^\times$, parametrized by the tame admissible pair $(EK/E,\widetilde{\theta})$, where the character $\widetilde{\theta}$ is the composition of $\theta$ and the norm map ${\rm Nr}:(EK)^\times\rightarrow K^\times$. The representations $\pi$ and $\Pi$ are given by the compactly induced representations ${\rm ind}_{J}^{D^\times}(\lambda)$ and ${\rm ind}_{J_E}^{D_E^\times}(\Lambda)$ respectively, where $J$ (resp. $J_E$) is an open compact mod-center subgroup of $D^\times$ (resp. $D_E^\times$) and $\lambda$ (resp. $\Lambda$) is a character of $J$ (resp. $J_E$). Then $\Pi = {\rm bc}_{E/F}(\pi)$, and it extends uniquely to a representation of $D_E^\times\rtimes {\rm Gal}(E/F)$. Assume that $\pi$ is integral, in which case the base change lift $\Pi$ is also integral. Let $\mathcal{L}_\Pi$ be the induced lattice consisting of all functions in $\Pi$, taking values in the ring of integers $\mathcal{O}_{\mathcal{K}}$ of $\mathcal{K}$. Then $\mathcal{L}_\Pi$ is stable under the action of $D_E^\times\rtimes {\rm Gal}(E/F)$, and the main result of this article is as follows.
\begin{theorem}\label{intro_thm}
Let $F$ be a non-Archimedean local field with residue characteristic $p$, and let $D$ be a central $F$-division algebra of index $n$. Let $E/F$ be a finite Galois extension of degree $\ell$, where $\ell$ is a prime distinct from $p$ and $\ell$ does not divide $n$. Let $\pi$ be an absolutely irreducible depth-zero $\mathcal{K}$-representation $D^\times$, and let $\Pi = {\rm bc}_{E/F}(\pi)$. Let $\mathcal{L}_\Pi$ be the induced lattice in $\Pi$. Then the Tate cohomology group $\widehat{\rm H}^0({\rm Gal}(E/F),\mathcal{L}_\Pi)$ is isomorphic to the Frobenius twist $r_\ell(\pi)^{(\ell)}$. Moreover, the first Tate cohomology group $\widehat{\rm H}^1({\rm Gal}(E/F),\mathcal{L}_\Pi)$ is trivial.
\end{theorem}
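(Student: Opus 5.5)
The plan is to compute Tate cohomology of the compactly induced lattice by a Mackey-type decomposition over $\sigma$-stable double cosets, and then reduce to a character-level computation on $J_E$. Write $\Gamma={\rm Gal}(E/F)=\langle\sigma\rangle$. The lattice is
$$\mathcal{L}_\Pi={\rm ind}_{J_E}^{D_E^\times}(\Lambda_{\mathcal{O}}),$$
where $\Lambda_{\mathcal{O}}$ is the $\mathcal{O}_{\mathcal{K}}$-valued character. Both $J_E$ and $\Lambda$ are $\Gamma$-stable, since $\widetilde\theta=\theta\circ{\rm Nr}$ is $\Gamma$-invariant. Hence $\Gamma$ acts on $\mathcal{L}_\Pi$ by $(\sigma f)(g)=\sigma(f(\sigma^{-1}g))$.

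\textbf{Step 1: reduce to the $\Gamma$-fixed cosets.} As an $\mathcal{O}_{\mathcal{K}}[\Gamma]$-module, $\mathcal{L}_\Pi$ is a direct sum over $J_E\backslash D_E^\times$ of rank-one pieces, which $\Gamma$ permutes. Every orbit of size $\ell$ is an induced $\Gamma$-module and so has vanishing Tate cohomology. Only cosets $J_Eg$ with $\sigma(J_Eg)=J_Eg$ contribute. The standard nonabelian cohomology argument, using $H^1(\Gamma,J_E)$ and Lang-type vanishing for the pro-$p$ part together with the depth-zero structure $J_E=E^\times\cdots$, should identify these fixed cosets with $J\backslash D^\times$. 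Here one uses that $D_E^{\times\Gamma}=D^\times$ and $J_E^\Gamma=J$, and checks that $J_E\cap D^\times=J$. This identification gives
$$\widehat{\rm H}^i(\Gamma,\mathcal{L}_\Pi)\cong {\rm ind}_J^{D^\times}\bigl(\widehat{\rm H}^i(\Gamma,\Lambda_{\mathcal{O}}|_J)\bigr)$$
as $D^\times$-representations. Alternatively, I would quote a general lemma of this shape from \cite{dhar2025jacquet} or \cite{nadimpalli2024tate}.

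\textbf{Step 2: the rank-one computation.} Since $\Gamma$ acts trivially on $\mathcal{O}_{\mathcal{K}}$ and preserves $\Lambda$, it acts trivially on the line $\Lambda_{\mathcal{O}}|_J$. Therefore
$$\widehat{\rm H}^0=\mathcal{O}_{\mathcal{K}}/\ell\cong\overline{\mathbb{F}}_\ell, \qquad \widehat{\rm H}^1={\rm Hom}(\Gamma,\mathcal{O}_{\mathcal{K}})=0,$$
the latter because $\mathcal{O}_{\mathcal{K}}$ is torsion-free. This already yields $\widehat{\rm H}^1(\Gamma,\mathcal{L}_\Pi)=0$. On $\widehat{\rm H}^0$, the group $J$ acts through $\Lambda|_J \bmod \ell$. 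The task is then to compute $\Lambda|_J$ in terms of $\lambda$.

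\textbf{Step 3 (main obstacle): compare $\Lambda|_J$ with $\lambda^\ell$.} Recall that $J=K^\times U_D^1$ with $\lambda|_{K^\times}=\theta$ and $\lambda$ trivial on $U_D^1$. Similarly $J_E=(EK)^\times U_{D_E}^1$ with $\Lambda$ built from $\widetilde\theta$. For $x\in K^\times$ we have ${\rm Nr}_{EK/K}(x)=x^\ell$, so $\Lambda(x)=\theta(x)^\ell$. Thus $\Lambda|_J=\lambda^\ell$, provided $J\subset J_E$ and $\Lambda$ and $\lambda$ are normalised compatibly. I expect the real difficulty to be in the precise description of $J,\lambda$ in \cite{BH_depth_zero}: there may be sign or $\delta$-twist characters, and a possible extra factor when $K/F$ and $E/F$ interact, for instance if $E\subset K$ in the unramified case. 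These must be checked to be $\ell$-th powers compatibly, or trivial mod $\ell$.

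\textbf{Step 4: identify the Frobenius twist.} Reducing mod $\ell$, $\overline{\lambda^\ell}=\bar\lambda^{(\ell)}$, the Frobenius twist of the coefficients. Hence
$$\widehat{\rm H}^0\cong{\rm ind}_J^{D^\times}\bigl(\bar\lambda^{(\ell)}\bigr)=\bigl({\rm ind}_J^{D^\times}\bar\lambda\bigr)^{(\ell)}=r_\ell(\pi)^{(\ell)}.$$
The last equality uses that reduction commutes with compact induction from the open compact-mod-centre subgroup $J$ of the induced integral lattice.
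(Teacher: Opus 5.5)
\textbf{Gap: wrong inducing subgroup $J$.} Your route is the paper's. It consists of:
\begin{itemize}
\item the Treumann--Venkatesh isomorphism $\widehat{\rm H}^i(\sigma,{\rm ind}_{J_E}^{D_E^\times}\Lambda_E)\simeq{\rm ind}_J^{D^\times}\widehat{\rm H}^i(\sigma,\Lambda_E)$, which your Step 1 reproves by discarding free orbits of cosets;
\item the identification $(J_E\backslash D_E^\times)^\sigma=J\backslash D^\times$ via ${\rm H}^1(\sigma,J_E)=1$;
\item the rank-one computation;
\item the identity $\Lambda_E|_J=\Lambda^\ell$ (paper's notation: $\Lambda$ on $J$, $\Lambda_E$ on $J_E$; your $\lambda$ and $\Lambda$).
\end{itemize}

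In the paper, $J=B^\times U_D^1$ and $\Lambda(bu)=\theta({\rm Nrd}_B(b))$, where $B$ is the centralizer of $K$ in $D$, a central $K$-division algebra of index $e=d/[K:F]$. Correspondingly $J_E=B_E^\times U_{D_E}^1$ with $B_E=B\otimes_K EK$. This agrees with your $K^\times U_D^1$ only when $[K:F]=d$. For $e>1$ your group is strictly smaller. It misses a uniformizer of $B$, which has $D$-valuation $[K:F]<d$. It also misses most of $\mathfrak{o}_B^\times$, whose image in $k_D^\times$ is all of $\mathbb{F}_{q^d}^\times$. For example, when $K=F$ and $d>1$ one has $J=D^\times$ and $\pi=\theta\circ{\rm Nrd}_D$, whereas ${\rm ind}_{F^\times U_D^1}^{D^\times}\theta$ is reducible.

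\textbf{Fix for Step 3.} As written, Step 3 checks $\Lambda_E=\lambda^\ell$ only on $K^\times U_D^1$, using ${\rm Nr}_{EK/K}(x)=x^\ell$, so it does not determine $\Lambda_E|_J$. The missing input is that the reduced norm is compatible with scalar extension. For $b\in B^\times\subset B_E^\times$ one has ${\rm Nrd}_{B_E}(b)={\rm Nrd}_B(b)\in K^\times$, hence $\Lambda_E(bu)=\theta\bigl({\rm Nr}_{EK/K}({\rm Nrd}_B(b))\bigr)=\theta({\rm Nrd}_B(b))^\ell=\Lambda(bu)^\ell$.

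\textbf{Fix for Step 1.} Pro-$p$ vanishing on $U_{D_E}^1$ is not enough to get ${\rm H}^1(\sigma,J_E)=1$. The remaining input is ${\rm H}^1(\sigma,B_E^\times)=1$, a noncommutative Hilbert 90. The paper proves it by the Poincar\'e-series argument of Lemma~\ref{cohom_triv}; your sketch does not supply it.

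\textbf{Your other worries.} They do not arise. Since $[K:F]\mid d$ and $\ell\nmid d$, we get $E\cap K=F$, so $EK=E\otimes_F K$ is a field of degree $\ell$ over $K$ and $E\subset K$ is impossible. The paper's $\Lambda$ carries no $\eta$-twist; $\eta_K$ appears only on the Galois side.

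\textbf{A point both you and the paper leave implicit.} ${\rm ind}_J^{D^\times}\overline{\Lambda}$ is the reduction of the induced lattice. Identifying it with the semisimplification $r_\ell(\pi)$ uses that $J\supseteq\mathfrak{o}_D^\times$ is normal of index $[K:F]$, which is prime to $\ell$. Hence this induced representation is already semisimple.
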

Note that the mod-$\ell$ reduction $r_\ell(\pi)$ is not irreducible, in general (see \cite[Corollary 3.24]{modular_type_inner}). So, the above theorem says that the Tate cohomology group $\widehat{\rm H}^0({\rm Gal}(E/F),\mathcal{L}_\Pi)$ is not necessarily irreducible. If $\ell$ does not divide $q^n-1$, in which case the mod-$\ell$ reduction $r_\ell(\pi)$ is irreducible (see \cite[Corollary 3.22]{modular_type_inner}), then it follows from Theorem \ref{intro_thm} that $\widehat{\rm H}^0({\rm Gal}(E/F),\mathcal{L})$ is irreducible for any $D_E^\times\rtimes {\rm Gal}(E/F)$-stable lattice $\mathcal{L}$ in $\Pi$.

The paper is organized as follows. In Section $2$, we introduce various notations and discuss preliminary notions which are essential for our main theorem. In Section $3$, we discuss the notion of Tate cohomology groups and prove Theorem \ref{intro_thm}.

\section{Preliminaries}
\subsection{Notations}
Let $F$ be a non-Archimedean local field with residue characteristic $p$. Let $D$ be a central $F$-division algebra of dimension $d^2$. Let $\mathfrak{o}_F$ (resp. $\mathfrak{o}_D$) denote the ring of integers of $F$ (resp. $D$), and let $\mathfrak{p}_F$ (resp. $\mathfrak{p}_D$) be the unique maximal ideal of $\mathfrak{o}_F$ (resp. $\mathfrak{o}_D$). We denote by $D^\times$ the unit group of $D$. The group $D^\times$ has a filtration $\{U_{D}^n,\,n\geq 0\}$, where 
$$ U_{D}^0 = \mathfrak{o}_D^\times,\,\,\,\,\,\,\, U_D^{n} = 1+\mathfrak{p}_D^n.$$ 
Let ${\rm Trd}_D:D\rightarrow F$ and ${\rm Nrd}_D:D^\times\rightarrow F^\times$ be the reduced trace map and the norm map, respectively. For a finite Galois extension $K/F$, we denote by ${\rm Nr}_{E/F}$ and ${\rm Tr}_{E/F}$ the norm map and trace map, respectively. Let $D_E$ be the $E$-algebra $D\otimes_F E$ and is a central simple algebra with $D_E^{{\rm Gal}(E/F)}=D$. Note that $D_E$ is not necessarily a division algebra. If the degree of extension $[E:F]$ is coprime to $d$, then $D_E$ is a division $E$-algebra of dimension $d^2$. For any group $G$, let $G_{ab}$ denote the abelianization of $G$--which is by definition, the quotient of $G$ by the normal subgroup generated by the elements $xyx^{-1}y^{-1}$, $x,y\in G$.

We fix an algebraic closure $\overline{\mathbb{Q}}_\ell$ of $\mathbb{Q}_\ell$.  Let $\overline{\mathbb{Z}}_\ell$ be the ring of integers of $\overline{\mathbb{Q}}_\ell$, and let $\mathfrak{m}$ denote the unique maximal ideal of $\overline{\mathbb{Z}}_\ell$. The quotient $\overline{\mathbb{Z}}_\ell/\mathfrak{m}\overline{\mathbb{Z}}_\ell$ is isomorphic to $\overline{\mathbb{F}}_\ell$, an algebraic closure of the finite field of cardinality $\ell$. Let $\mathcal{K}$ be the maximal unramified extension of $\mathbb{Q}_\ell$ in $\overline{\mathbb{Q}}_\ell$, and let $\mathcal{O}_{\mathcal{K}}$ be the ring of integers of $\mathcal{K}$. Note that $\ell$ is a uniformizer of $\mathcal{K}$ with $\mathcal{O}_{\mathcal{K}}/\ell\mathcal{O}_{\mathcal{K}}\simeq \overline{\mathbb{F}}_\ell$.
For a locally profinite group $G$ and a closed subgroup $H$ of $G$, we denote by ${\rm Ind}_H^G$ and ${\rm ind}_H^G$ the smooth induction functor and the compact induction functor, respectively.  

\subsection{Integral representation and mod-$\ell$ reduction}
Let $G$ be a locally profinite group. Let $R$ denote either $\overline{\mathbb{Q}}_\ell$ or $\mathcal{K}$ or $\overline{\mathbb{F}}_\ell$. Let $(\pi,V)$ be a smooth $R$-representation of $G$, where smooth means the stabilizer of each vector in the $R$-vector space $V$ is an open subgroup of $G$. The representation $(\pi,V)$ is called $\ell$-adic (resp. $\ell$-modular) if $V$ is a vector space over $\overline{\mathbb{Q}}_\ell$ (resp. $\overline{\mathbb{F}}_\ell$). Moreover, $(\pi,V)$ is called admissible if for each compact open subgroup $K$ of $G$, the space of $K$-fixed vectors $V^K$ is finite dimensional. A smooth $\ell$-adic representation $(\pi,V)$ is said to be integral if it has finite length and there exists a $G$-stable $\overline{\mathbb{Z}}_\ell$-lattice $\mathcal{L}$ in $V$. Suppose $(\pi,V)$ is an admissible $\ell$-adic 
integral representation defined over $\mathcal{K}$, i.e., there
exists a $G$-stable $\mathcal{K}$-vector subspace $V_{\mathcal{K}}$ in $V$ such
that $V_{\mathcal{K}}\otimes_{\mathcal{K}} \overline{\mathbb{Q}}_\ell = V$, then
there exists an $\mathcal{O}_{\mathcal{K}}[G]$-stable lattice $\mathcal{L}_{\mathcal{K}}$ in $V_{\mathcal{K}}$
and $\mathcal{L}_{\mathcal{K}} \otimes_{\mathcal{O}_{\mathcal{K}}}
\overline{\mathbb{Z}}_\ell$ is a
$G$-stable $\overline {\mathbb{Z}}_\ell$-lattice in $V$. The natural action of $G$ on the $\overline{\mathbb{F}}_\ell$-vector space $\mathcal{L}\otimes_{\overline{\mathbb{Z}}_\ell}
\overline{\mathbb{F}}_\ell$ gives an $\ell$-modular representation of $G$.
Using \cite[Chapter 2, 5.11a - 5.11.b]{Vigneras_modl_book}, the representation $(\pi,\mathcal{L}\otimes_{\overline
{\mathbb{Z}}_\ell}\overline{\mathbb{F}}_\ell)$ has finite length, and its semi-simplification is independent of the choice of the $G$-invariant lattice $\mathcal{L}$ in $V$. We denote this semi-simplification by $r_\ell(\pi)$, and it is called the mod-$\ell$ reduction of $\pi$. 
\subsubsection{Frobenius twist}
Let $(\rho,V)$ be an $\ell$-modular representation of a group $G$. Let $V^{(\ell)}$ be the $\overline{\mathbb{F}}_\ell$-vector space whose additive group structure is same as that of $V$ and the scalar action on $V^{(\ell)}$ is given by 
$$ c\ast v := c^{\frac{1}{\ell}}\cdot v $$
for all $c\in \overline{\mathbb{F}}_\ell$ and $v\in V$. Let $\rho^{(\ell)}$ be the representation of $G$ on $V^{(\ell)}$, defined by $\rho^{(\ell)}(g):=\rho(g)$ for $g\in G$. The pair $(\rho^{(\ell)}, V^{(\ell)})$ is called the Frobenius twist of $(\rho,V)$.
\subsubsection{Local Langlands correspondence}
We now recall the local Langlands correspondence for the group ${\rm GL}_n(F)$. Let us introduce some notations. Let $\mathcal{W}_F$ be the Weil-group of $F$, and let $\nu$ be the absolute value on $\mathcal{W}_F$ that sends the geometric Frobenius element in $\mathcal{W}_F$ to $q^{-1}$, where $q$ is the cardinality of the residue field of $F$. An $n$-dimensional semisimple Weil-Deligne $\mathbb{C}$-representation of $\mathcal{W}_F$ is a pair $(\varphi,\mathfrak{n})$ consisting of an $n$-dimensional semisimple $\mathbb{C}$-representation $\varphi$ of $\mathcal{W}_F$ and a nilpotent endomorphism $\mathfrak{n}$ on the space of $\varphi$ such that 
$$ \varphi(x)\mathfrak{n}\varphi(x)^{-1} = \nu(x)\mathfrak{n} $$
for all $x\in\mathcal{W}_F$. Let $\mathcal{G}_\mathbb{C}(n,F)$ be the set of isomorphism classes of semisimple Weil-Deligne representations of $\mathcal{W}_F$ of dimension $n$, and let ${\rm Irr}_{\mathbb{C}}(n,F)$ denote the set of isomorphism classes of irreducible smooth $\mathbb{C}$-representations of ${\rm GL}_n(F)$. The local Langlands correspondence is a bijection
$$ {\rm Irr}_{\mathbb{C}}(n,F)\longrightarrow \mathcal{G}_{\mathbb{C}}(n,F) $$
which is uniquely characterized by the preservation of the local $L$ and $\epsilon$-factors associated to both sides. By fixing an isomorphism $\iota:\mathbb{C}\xrightarrow{\sim}\overline{\mathbb{Q}}_\ell$, we obtain an $\ell$-adic local Langlands correspondence 
$$ {\rm LLC}_F : {\rm Irr}_{\overline{\mathbb{Q}}_\ell}(n,F)\longrightarrow\mathcal{G}_{\overline{\mathbb{Q}}_\ell}(n,F), $$
which is also uniquely characterized by the preservation of local $L$ and $\epsilon$-factors.
\subsubsection{}
The local base change lifting is a map
$$ {\rm BC}_{E/F}:{\rm Irr}_{\overline{\mathbb{Q}}_\ell}(n,F)\rightarrow {\rm Irr}_{\overline{\mathbb{Q}}_\ell}(n,E) $$
which is defined by a certain character identity relation (see \cite{Arthur_Clozel_BC}). There is a characterization of local base change map in terms of ${\rm LLC}_F$. Let $\pi$ be an irreducible smooth $\ell$-adic representation of ${\rm GL}_n(F)$. An irreducible $\ell$-adic representation $\Pi$ of ${\rm GL}_n(E)$ is called the base change lifting of $\pi$ if 
$$ {\rm res}_{\mathcal{W}_E}({\rm LLC}_F(\pi)) \simeq {\rm LLC}_E(\Pi). $$
Here, the Weil group $\mathcal{W}_E$ of $E$ is viewed as a subgroup of $\mathcal{W}_F$. Note that $\Pi$ is isomorphic to $\Pi^{\sigma}$ for all $\sigma\in {\rm Gal}(E/F)$.

Let $\pi$ be an essentially square-integrable representation of ${\rm GL}_n(F)$. It is characterized as the unique irreducible quotient of the parabolically induced representation 
$$ \xi\times \xi\lvert\cdot\rvert_F\times\cdots
\times\xi\lvert\cdot\rvert_F^{r-1}, $$
for a unique integer $r$ dividing $n$ and a unique cuspidal representation $\xi$ of ${\rm GL}_{n/r}(F)$. Here, $\lvert\cdot\rvert_F$ is the normalized absolute value on $F$. The representation $\pi$ is denoted by $\delta(\xi,r)$.
Assume that $\ell$ does not divide $n$. Let $\tau$ be the base change lift of $\xi$ to ${\rm GL}_{n/r}(E)$. Note that $\tau$ is cuspidal, and the parabolically induced representation
$$ \tau\times\tau\lvert{\rm Nr}_{E/F}\rvert_F\times\cdots\times
\tau\lvert{\rm Nr}_{E/F}\rvert_F^{r-1} $$ has a unique irreducible quotient, denoted by $\delta(\tau,r)$. The representation $\delta(\tau,r)$ is the base change lift of $\delta(\xi,r)$.

\subsubsection{}
Before proceeding further, we now recall certain Weil-Deligne representations of $\mathcal{W}_F$. Let $V$ be an $n$-dimensional $\overline{\mathbb{Q}}_\ell$-vector space with basis $v_0, v_1,\dots,v_{n-1}$. Let $\mathfrak{n}$ be an endomorphism of $V$, defined on the basis elements as follows
$$ \mathfrak{n}v_i = v_{i+1},\,\,\,\, 
0\leq i< n-1,$$
$$ \mathfrak{n}v_{n-1} = 0.$$
Let $\rho$ be a semisimple smooth representation of $\mathcal{W}_F$ on $V$, defined as
$$ \rho(x) v_i = \nu(x)^{i+(1-n)/2}\,v_i,\,\,\,\, 0\leq i\leq n-1 $$
for all $x\in\mathcal{W}_F$. The pair $(\rho,\mathfrak{n})$ is an $n$-dimensional semisimple Weil-Deligne representation of $\mathcal{W}_F$, and is denoted by ${\rm Sp}_n(F)$. 

\subsection{Local Jacquet-Langlands correspondence}
In this part, we discuss Jacquet-Langlands correspondence and the base change lifting of irreducible $\ell$-adic representations of $D^\times$, where $D$ is a central $F$-division algebra of dimension $d^2$. Let $R$ be either $\mathbb{C}$ or $\overline{\mathbb{Q}}_\ell$. Let ${\rm Irr}_{R}(D^\times)$ be the set of isomorphism classes of irreducible $R$-representations of $D^\times$. Let $\mathcal{D}_R({\rm GL}_d(F))$ denote the set of isomorphism classes of essentially square-integrable $R$-representations of ${\rm GL}_d(F)$. When $R=\mathbb{C}$, the local Jacquet-Langlands correspondence (\cite{Automorphic_GL2}, \cite{Badulescu_JL}) is a bijection
$$ {\rm Irr}_{\mathbb{C}}(D^\times)\rightarrow 
\mathcal{D}_{\mathbb{C}}({\rm GL}_d(F)) $$
which is characterized by the character identity relation on the regular semisimple elements. We fix an isomorphism $\iota:\mathbb{C}\xrightarrow{\sim} \overline{\mathbb{Q}}_\ell$. Then there is an $\ell$-adic Jacquet-Langlands correspondence (see \cite{MS_JL})
$$ {\rm JL}_F : \mathcal{D}_{\overline{\mathbb{Q}}_\ell}
({\rm GL}_m(D))\rightarrow \mathcal{D}_{\overline{\mathbb{Q}}_\ell}({\rm GL}_n(F)) $$
and is independent of the choice of $\iota$.
\subsubsection{Base change}
Let $E/F$ be a Galois extension of prime degree $\ell$, where $\ell\ne p$ and $\ell$ does not divide $d$. Set $D_E = D\otimes_F E$. Then $D_E$ is a central $E$-division algebra of dimension $d^2$. Since $\ell$ does not divide $d$, the base change map ${\rm BC}_{E/F}$ takes essentially square-integrable representations of ${\rm GL}_d(F)$ to essentially square-integrable representations of ${\rm GL}_d(E)$, and we have the following commutative diagram 
$$\xymatrix{
	{\rm Irr}_{\overline{\mathbb{Q}}_\ell}(D^\times)\ar[dd]_{{\rm bc}_{E/F}}  \ar[rr]^{{\rm JL}_F}  &&  \mathcal{D}_{\overline{\mathbb{Q}}_\ell}({\rm GL}_d(F)) \ar[dd]^{{{\rm BC}_{E/F}}} \\\\
	{\rm Irr}_{\overline{\mathbb{Q}}_\ell}(D_E^\times) \ar[rr]_{{\rm JL}_E} &&  \mathcal{D}_
	{\overline{\mathbb{Q}}_\ell}({\rm GL}_d(E))
}$$
where the vertical map ${\rm bc}_{E/F}$ is defined by the composition ${\rm JL}_E^{-1}\circ {\rm BC}_{E/F}\circ {\rm JL}_F$. Let $\pi$ be an irreducible smooth $\ell$-adic representation of $D^\times$. An irreducible smooth $\ell$-adic representation $\Pi$ of $D_E^\times$ is called the {\it base change} of $\pi$ if $\Pi={\rm bc}_{E/F}(\pi)$. Since ${\rm BC}_{E/F}({\rm JL}_F(\pi)) \simeq {\rm BC}_{E/F}({\rm JL}_F(\pi))^\sigma$ for all $\sigma\in {\rm Gal}(E/F)$ and the Jacquet-Langlands correspondence preserves objects in ${\rm Irr}_{\overline{\mathbb{Q}}_\ell}(D_E^\times)$ that are ${\rm Gal}(E/F)$-equivariant, we have $\Pi\simeq \Pi^\sigma$ for all $\sigma\in{\rm Gal}(E/F)$.   
\subsection{Depth-zero representations of $D^\times$}
We now discuss the depth-zero representations of $D^\times$ and their construction using admissible tame pairs. We follow the exposition of \cite{BH_depth_zero}. An irreducible smooth $\ell$-adic representation $\pi$ of $D^\times$ is said to be of depth-zero if $\pi|_{U_D^1}$ is a trivial representation. Since the Jacquet-Langlands correspondence preserves depths in the sense of \cite{Moy_Prasad_depths} and the base change lifting ${\rm BC}_{E/F}$ takes an irreducible representation of ${\rm GL}_d(F)$ of depth $r$ to an irreducible representation of ${\rm GL}_d(E)$ of depth $er$, where $e$ is the ramification index of $E/F$, so does the map ${\rm bc}_{E/F}$. In particular, it preserves depth-zero representations of $D^\times$.
There is an explicit construction of depth-zero representations of $D^\times$ in terms of tame admissible pairs--the notion that we will discuss next.
\subsubsection{Admissible pair}\label{Adm_pair}
A tame pair over $F$ consists of a finite unramified extension $K$ of $F$ and a smooth character $\theta$ of $K^\times$ which is tamely ramified, i.e., $\theta$ is trivial on $U_K^1$. Moreover, the pair $(K/F,\theta)$ is called {\it admissible} if the Galois conjugates $\theta^\gamma$, $\gamma\in{\rm Gal}(K/F)$, are distinct. Two admissible tame pairs $(K_i/F,\theta_i)$, $i=0,1$, are said to be isomorphic if there exists an $F$-isomorphism $\alpha:K_1\rightarrow K_2$ such that $\theta_1=\theta_2\circ \alpha$.

We now recall the construction of irreducible smooth depth-zero representations of $D^\times$ using admissible tame pairs. Let $(K/F,\theta)$ be an admissible tame pair such that $[K:F]$ divides $d$. Choose an $F$-embedding $K\hookrightarrow D$ and identify $K$ as an $F$-subalgebra of $D$. Let $B$ be the $D$-centralizer of $K$. Then $B$ is a central $K$-division algebra of dimension $e=d/[K:F]$. Set $J=B^\times U_D^1$, and let $\Lambda$ be the character of $J$ defined as 
$$ \Lambda(bu) = \theta({\rm Nrd}_B(b)), $$
where ${\rm Nrd}_B:B^\times\rightarrow K^\times$ is the reduced norm map. Then the induced representation 
$$ \pi_D(\theta) := {\rm ind}_{J}^{D^\times}(\Lambda) $$
is an irreducible smooth depth-zero $\ell$-adic representation of $D^\times$. The isomorphism class of $\pi_D(\theta)$ depends on the isomorphism class of the admissible tame pair $(K/F,\theta)$. In fact, any irreducible smooth depth-zero representation of $D^\times$ is constructed in this way.

On the other hand, there is a depth-zero $\ell$-adic representation of $\mathcal{W}_F$ associated with the pair $(K/F,\theta)$. We recall the construction. Composing with the Artin reciprocity map $\mathcal{W}_K\rightarrow K^\times$, the character $\theta$ of $K^\times$ induces a character of $\mathcal{W}_K$ which is again denoted by $\theta$. Then the induced representation
$$ \sigma(\theta) := {\rm ind}_{\mathcal{W}_K}^{\mathcal{W}_F}(\theta) $$
is irreducible and of depth-zero. Its isomorphism class depends only on the isomorphism class of the pair $(K/F,\theta)$. In fact, any irreducible smooth depth-zero $\ell$-adic representation of $\mathcal{W}_F$ is obtained in this way.

\subsubsection{Base change of admissible pairs}
Let $EK$ be the field $E\otimes_F K$. Let $\sigma$ be a generator of the Galois group ${\rm Gal}(E/F)$. Note that ${\rm Gal}(E/F)$ acts on both $D_E$ and $EK$ with the fixed point subgroups $D$ and $K$, respectively. Let $B_E$ be the centralizer of $EK$ in $D_E$. Note that $B_E = B\otimes_K EK$ and is stable under the action of ${\rm Gal}(E/F)$ with $B_E^\sigma= D\cap B_E =B$. Let ${\rm Nrd}_{B_E}:B_E^\times\rightarrow (EK)^\times$ be the reduced norm map. We identify the generator of ${\rm Gal}(EK/K)$ with the generator $\sigma$ of ${\rm Gal}(E/F)$ via the isomorphism 
$$ {\rm Gal}(EK/K)\rightarrow {\rm Gal}(E/F) $$
$$ \tau\longmapsto \tau|_{E}. $$
Let ${\rm Cor}_{E/F}:(B_E^\times)_{ab}\rightarrow B^\times_{ab}$ be the corestriction map defined by 
$$ {\rm Cor}_{E/F}(b) :=\prod_{\sigma\in {\rm Gal}(E/F)}\sigma(b). $$
Then we have the following relation ${\rm Nr}_{EK/K}\circ {\rm Nrd}_{B_E} = {\rm Nrd}_B\circ {\rm Cor}_{E/F}$, where ${\rm Nr}_{EK/K}$ denotes the norm map associated to the field extension $EK/K$.
Let $\widetilde{\theta}$ be the character of $(EK)^\times$, given by the composition $\theta\circ{\rm Nr}_{EK/K}$.
\begin{lemma}
The pair $(EK/E,\widetilde{\theta})$ is an admissible tame pair.
\end{lemma}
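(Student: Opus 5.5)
We have to check three things: $EK/E$ is a finite unramified extension, $\widetilde{\theta}$ is tamely ramified, and the conjugates $\widetilde{\theta}^\gamma$, $\gamma\in{\rm Gal}(EK/E)$, are pairwise distinct.

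\emph{Unramified and $EK$ a field.} Since $[E:F]=\ell$ is prime and $\ell\nmid d$, while $[K:F]$ divides $d$, the degrees $[E:F]$ and $[K:F]$ are coprime. Hence $E\otimes_F K$ is a field of degree $\ell[K:F]$ over $F$, and ${\rm Gal}(EK/E)\simeq {\rm Gal}(K/F)$ by restriction. Moreover $EK/E$ is unramified, because $K/F$ is unramified and unramified extensions are stable under base change.

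\emph{Tameness.} The norm map ${\rm Nr}_{EK/K}$ sends $U^1_{EK}$ into $U^1_K$, since norms of principal units are principal units. As $\theta$ is trivial on $U^1_K$, the character $\widetilde{\theta}=\theta\circ{\rm Nr}_{EK/K}$ is trivial on $U^1_{EK}$.

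\emph{Admissibility.} This is the main step. Let $\gamma\in{\rm Gal}(EK/E)$, and write $\gamma_0=\gamma|_K\in{\rm Gal}(K/F)$. Since ${\rm Gal}(EK/F)={\rm Gal}(EK/E)\times{\rm Gal}(EK/K)$ is abelian, $\gamma$ commutes with ${\rm Nr}_{EK/K}$. This gives
$$\widetilde{\theta}^\gamma=\theta^{\gamma_0}\circ{\rm Nr}_{EK/K}.$$
Suppose $\widetilde{\theta}^\gamma=\widetilde{\theta}$. Then the character $\chi=\theta^{\gamma_0}\theta^{-1}$ of $K^\times$ is trivial on ${\rm Nr}_{EK/K}((EK)^\times)$. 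By local class field theory, this norm group has index $[EK:K]=\ell$ in $K^\times$, so $\chi^\ell=1$.

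To conclude that $\chi=1$, we cannot rely on $\ell\nmid[K:F]$ alone, so we argue as follows. By a counting argument, $\chi$ and hence $\chi^\ell$ factor through ${\rm Nr}$, which reduces us to showing that $\widetilde{\theta}^\gamma=\widetilde{\theta}$ forces $\theta^{\gamma_0}=\theta$. Let $f$ be the order of $\gamma_0$ in ${\rm Gal}(K/F)$, so that $f$ divides $[K:F]$ and $\gcd(f,\ell)=1$. Since $\chi=\theta^{\gamma_0-1}$, we have
$$\prod_{i=0}^{f-1}\chi^{\gamma_0^i}=1.$$
Using $\chi^\ell=1$ together with $\gcd(f,\ell)=1$, we then try to show that $\chi$ is trivial, by combining this relation with $\chi^{\gamma_0^{i}}=\theta^{\gamma_0^{i+1}}\theta^{-\gamma_0^i}$.

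A cleaner route is via the Weil group. The restriction $\sigma(\theta)|_{\mathcal W_E}$ is ${\rm ind}_{\mathcal W_{EK}}^{\mathcal W_E}(\widetilde\theta)$, by Mackey theory and $\mathcal W_K\mathcal W_E=\mathcal W_F$. Admissibility of $(EK/E,\widetilde\theta)$ is equivalent to irreducibility of this restriction. An irreducible representation of $\mathcal W_F$ whose restriction to the normal subgroup $\mathcal W_E$ of prime index $\ell$ is reducible must be induced from $\mathcal W_E$, and so has dimension divisible by $\ell$. But $\dim\sigma(\theta)=[K:F]$ is prime to $\ell$. Hence the restriction is irreducible and $(EK/E,\widetilde\theta)$ is admissible. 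I would present this Clifford-theory argument as the proof of the admissibility step.
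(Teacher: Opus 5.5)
Your proof is correct, and on the one substantive point, admissibility, it takes a genuinely different route from the paper. The paper handles unramifiedness and tameness exactly as you do. It then merely asserts that the conjugates $\widetilde{\theta}^\gamma$, $\gamma\in{\rm Gal}(EK/E)$, are distinct, without giving a reason.

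Your Weil-group argument supplies that reason. It combines three ingredients:
\begin{itemize}
\item the Mackey identity ${\rm Ind}_{\mathcal{W}_K}^{\mathcal{W}_F}(\theta)|_{\mathcal{W}_E}\simeq{\rm Ind}_{\mathcal{W}_{EK}}^{\mathcal{W}_E}(\widetilde{\theta})$, which is the same computation the paper performs in the next lemma;
\item the Clifford dichotomy for the normal subgroup $\mathcal{W}_E$ of prime index $\ell$;
\item the fact that $\dim\sigma(\theta)=[K:F]$ is prime to $\ell$.
\end{itemize}
Irreducibility of the restriction then forces ${\rm Hom}_{\mathcal{W}_{EK}}(\widetilde{\theta},\widetilde{\theta}^\gamma)=0$ for $\gamma\neq 1$. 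The gain is that the hypothesis $\ell\nmid d$ enters transparently as a dimension count, and the argument fits naturally with the base-change lemma.

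In the write-up, state the standard inputs explicitly:
\begin{itemize}
\item irreducibility of $\sigma(\theta)$ is the Mackey translation of admissibility of $(K/F,\theta)$;
\item the Artin map intertwines the ${\rm Gal}(EK/E)$-action on characters of $(EK)^\times$ with $\mathcal{W}_E$-conjugation on characters of $\mathcal{W}_{EK}$;
\item the ``stable $\Rightarrow$ irreducible restriction'' half of Clifford theory uses that $\mathcal{W}_F/\mathcal{W}_E$ is cyclic and that $\overline{\mathbb{Q}}_\ell$ is algebraically closed.
\end{itemize}

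Delete your first, abandoned attempt as written. The ``counting argument'' sentence does not make sense, and the product relation $\prod_i\chi^{\gamma_0^i}=1$ does not close the argument by itself. It can, however, be completed with one extra observation, which gives a proof with no representation theory at all. Since ${\rm Gal}(EK/F)\simeq{\rm Gal}(E/F)\times{\rm Gal}(K/F)$ is abelian and the Artin map is Galois-equivariant, ${\rm Gal}(K/F)$ acts trivially on $K^\times/{\rm Nr}_{EK/K}((EK)^\times)\simeq{\rm Gal}(EK/K)$. Hence $\chi^{\gamma_0}=\chi$. It follows that $\theta^{\gamma_0^i}=\theta\chi^i$ for all $i$, so $\chi^f=1$. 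Together with $\chi^\ell=1$ and $\gcd(f,\ell)=1$, this gives $\chi=1$, i.e. $\theta^{\gamma_0}=\theta$. Admissibility of $(K/F,\theta)$ then gives $\gamma_0=1$, and therefore $\gamma=1$.
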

\begin{proof}
Since $K/F$ is unramified, the extension $EK/E$ is unramified of degree dividing $[K:F]$. Note that ${\rm Nr}_{EK/K}(U_{EK}^1)\subseteq U_K^1$, which implies that $\widetilde{\theta}$ is trivial on $U_{EK}^1$ as $\theta$ is trivial on $U_K^1$. Thus, we get that $(EK/E,\widetilde{\theta})$ is a tame pair. Moreover, the conjugates $\widetilde{\theta}^\gamma$, $\gamma\in {\rm Gal}(EK/E)$, are disjoint. Hence, $(EK/E,\widetilde{\theta})$ is an admissible tame pair..
\end{proof}
We set $J_E = B_E^\times U_{D_E}^1$. Note that $J_E$ is stable under the action of ${\rm Gal}(E/F)$ and $J_E^\sigma = J$. Define the character $\Lambda_E$ of $J_E$ as follows 
$$ \Lambda_E(bu) := \widetilde{\theta}({\rm Nrd}_{B_E}(b)) $$
for $b\in B_E^\times$ and $u\in U_{D_E}^1$. Then $\pi_{D_E}(\widetilde{\theta}):={\rm ind}_{J_E}^{D_E^\times}(\Lambda_E)$ is an irreducible depth-zero $\ell$-adic representation of $D_E^\times$. The next lemma shows that $\pi_{D_E}(\widetilde{\theta})$ is the base change lifting of $\pi_D(\theta)$.
\begin{lemma}
We have 
$\pi_{D_E}(\widetilde{\theta}) = {\rm bc}_{E/F}(\pi_D(\theta))$.
\end{lemma}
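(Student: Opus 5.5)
We will prove the equality on the Galois side, via the commutative diagram defining ${\rm bc}_{E/F}={\rm JL}_E^{-1}\circ{\rm BC}_{E/F}\circ{\rm JL}_F$ and the characterization ${\rm res}_{\mathcal{W}_E}({\rm LLC}_F(\cdot))\simeq{\rm LLC}_E(\cdot)$ of ${\rm BC}_{E/F}$. The plan has three steps.

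\textbf{Step 1: the Langlands parameter of $\pi_D(\theta)$.} We use the explicit description of the depth-zero Jacquet--Langlands correspondence from \cite{BH_depth_zero}. Put $m=[K:F]$ and $e=d/m$. Then ${\rm JL}_F(\pi_D(\theta))=\delta(\xi,e)$, where $\xi$ is the depth-zero cuspidal representation of ${\rm GL}_m(F)$ attached to $(K/F,\theta)$, possibly after twisting $\theta$ by an unramified quadratic (rectifier) character $\mu_K$. Consequently
$$ {\rm LLC}_F({\rm JL}_F(\pi_D(\theta)))\simeq \sigma(\theta\mu)\otimes{\rm Sp}_e(F). $$
The same recipe over $E$, applied to $(EK/E,\widetilde\theta)$, gives
$$ {\rm LLC}_E({\rm JL}_E(\pi_{D_E}(\widetilde\theta)))\simeq \sigma(\widetilde\theta\widetilde\mu)\otimes{\rm Sp}_{e'}(E). $$
Here $e'=d/[EK:E]$ is the index of $B_E$.

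\textbf{Step 2: restriction to $\mathcal{W}_E$.} We restrict the parameter from Step 1 to $\mathcal{W}_E$. The restriction of ${\rm Sp}_e(F)$ is ${\rm Sp}_e(E)$ after renormalizing $\nu$, as in the definition of $\delta(\tau,r)$. For the induced part we use Mackey's formula. Since $[E:F]=\ell$ is prime to $d$, hence to $m$, the fields $E$ and $K$ are linearly disjoint. So $\mathcal{W}_F=\mathcal{W}_E\mathcal{W}_K$ and $\mathcal{W}_E\cap\mathcal{W}_K=\mathcal{W}_{EK}$, which gives
$$ {\rm res}_{\mathcal{W}_E}\,{\rm ind}_{\mathcal{W}_K}^{\mathcal{W}_F}\theta\simeq{\rm ind}_{\mathcal{W}_{EK}}^{\mathcal{W}_E}({\rm res}_{\mathcal{W}_{EK}}\theta). $$
By functoriality of local class field theory, ${\rm res}_{\mathcal{W}_{EK}}\theta$ corresponds to $\theta\circ{\rm Nr}_{EK/K}=\widetilde\theta$. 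Linear disjointness also gives $[EK:E]=m$, so $e'=e$. The admissibility of $(EK/E,\widetilde\theta)$ from the previous lemma shows that $\sigma(\widetilde\theta)$ is irreducible. Hence ${\rm BC}_{E/F}(\delta(\xi,e))=\delta(\tau,e)$ with $\tau$ the cuspidal representation attached to $\widetilde\theta$, consistent with the recalled description of base change of $\delta(\xi,r)$.

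\textbf{Step 3: conclusion.} Applying ${\rm JL}_E^{-1}$ identifies ${\rm bc}_{E/F}(\pi_D(\theta))$ with $\pi_{D_E}(\widetilde\theta)$. The main obstacle is bookkeeping of the rectifying characters. I will check that $\mu_K$ restricted along the norm is $\widetilde\mu$. These characters are unramified of order dividing $2$ and depend on $e$ and $m$, both unchanged by passage to $E$. Moreover ${\rm Nr}_{EK/K}$ sends a uniformizer to a uniformizer to the power $e(E/F)$, or to its $f$-power in the unramified case, and $\ell$ is odd when it matters. So the rectifiers match, possibly after a parity check for $\ell=2$, which forces $d$ odd and makes the rectifier trivial.
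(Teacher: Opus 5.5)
Your proposal is correct and follows essentially the same route as the paper: compute ${\rm LLC}\circ{\rm JL}$ of both sides from the explicit depth-zero formula, restrict to $\mathcal{W}_E$ using ${\rm Sp}_e(F)|_{\mathcal{W}_E}={\rm Sp}_e(E)$, apply Mackey with a single double coset (your linear-disjointness argument is the paper's ``divides both $f$ and $\ell$''), and use ${\rm res}_{\mathcal{W}_{EK}}\theta\leftrightarrow\theta\circ{\rm Nr}_{EK/K}$; your rectifier check, including the $\ell=2$ parity remark, is more careful than the paper, which simply sets $\eta_{EK}:=\eta_K\circ{\rm Nr}_{EK/K}$. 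Two harmless slips. First, no renormalization of $\nu$ is needed, since $\nu_F|_{\mathcal{W}_E}=\nu_E$ exactly. Second, the norm of a uniformizer of $EK$ has $K$-valuation $f(E/F)$, which is $1$ if $E/F$ is ramified and $\ell$ if it is unramified, not $e(E/F)$; neither slip affects the parity conclusion.
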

\begin{proof}
We prove the lemma by considering the corresponding Galois representations. Recall that
$$ ({\rm LLC}_F\circ {\rm JL}_F)(\pi_D(\theta)) = \sigma(\eta_K^{e(f-1)}\theta)\otimes {\rm Sp}_e(F). $$
and 
$$ ({\rm LLC}_E\circ {\rm JL}_E)(\pi_{D_E}(\widetilde{\theta})) = \sigma(\eta_{EK}^{e(f-1)}\widetilde{\theta})\otimes{\rm Sp}_e(E), $$
where $\eta_K$ is the quadratic character of $K^\times$ and $\eta_{EK} = \eta_K\circ {\rm Nr}_{EK/K}$. We have to show that 
$$ \sigma(\eta_K^{e(f-1)}\theta)\otimes {\rm Sp}_e(F) |_{\mathcal{W}_E} = \sigma(\eta_{EK}^{e(f-1)}\widetilde{\theta})\otimes{\rm Sp}_e(E). $$
Note that ${\rm Sp}_e(F)|_{\mathcal{W}_E} ={\rm Sp}_e(E)$. Therefore, it remains to show that 
$$ \sigma(\eta_{K}^{e(f-1)}\theta)|_{\mathcal{W}_E} =\sigma(\eta_{EK}^{e(f-1)}\widetilde{\theta}). $$
By definition, we have $\sigma(\eta_K^{e(f-1)}\theta) = \eta_K^{e(f-1)}\otimes{\rm Ind}_{\mathcal{W}_K}^{\mathcal{W}_F}(\theta)$ and $\sigma(\eta_{EK}^{e(f-1)}\theta) = \eta_{EK}^{e(f-1)}\otimes{\rm Ind}_{\mathcal{W}_{EK}}^{\mathcal{W}_E}(\widetilde{\theta})$. Also note that $\eta_K|_{\mathcal{W}_{EK}} = \eta_{EK}$. By Mackey decomposition, we have
$$ {\rm Ind}_{\mathcal{W}_{K}}^{\mathcal{W}_F}(\theta)|_{\mathcal{W}_E} = \bigoplus_{\gamma\in\mathcal{W}_K\backslash \mathcal{W}_F/\mathcal{W}_E}{\rm Ind}_{\mathcal{W}_E\cap \mathcal{W}_K^\gamma}^{\mathcal{W}_E}(\theta^{(\gamma)}|_{\mathcal{W}_E\cap
\mathcal{W}_K^\gamma}). $$
Note that the cardinality of the space of double cosets $\mathcal{W}_K\backslash \mathcal{W}_F/\mathcal{W}_E$ divides both $f$ and $\ell$, and hence equal to $1$. Since $\mathcal{W}_E\cap\mathcal{W}_K=\mathcal{W}_{EK}$ and $\theta|_{\mathcal{W}_{EK}} =\widetilde{\theta}$, we get
$$ {\rm Ind}_{\mathcal{W}_{K}}^{\mathcal{W}_F}(\theta)|_{\mathcal{W}_E} = {\rm Ind}_{\mathcal{W}_{EK}}^{\mathcal{W}_E}(\widetilde{\theta}) $$
This proves the lemma.
\end{proof}

\subsubsection{Rationality structure}
Let $\mathcal{K}$ denote the maximal unramified extension of $\mathbb{Q}_\ell$ in $\overline{\mathbb{Q}}_\ell$. Let $\pi_{\mathcal{K}}$ be an absolutely irreducible depth-zero $\mathcal{K}$-representation of $D^\times$. Using the same construction as in Subsection (\ref{Adm_pair}), we get that $\pi_{\mathcal{K}}=\pi_D(\theta)$ for an admissible tame pair $(K/F,\theta)$. Recall that $\pi_{\mathcal{K}}$ is the compactly induced representation ${\rm ind}_{J}^{D^\times}(\Lambda)$. Then $\pi=\pi_{\mathcal{K}}\otimes_{\mathcal{K}}
\overline{\mathbb{Q}}_\ell$ is an irreducible depth-zero $\ell$-adic representation of $D^\times$. Let $\Pi = {\rm bc}_{E/F}(\pi)$. Note that $\Pi = {\rm ind}_{J_E}^{D_E^\times}(\Lambda_E)$, where $\Lambda_E$ is a smooth character of $J_E$ induced by the admissible tame pair $(EK/E,\widetilde{\theta})$, where $\widetilde{\theta}=\theta\circ {\rm Nr}_{EK/K}$. Let $\Pi_{\mathcal{K}}$ be the space of $\mathcal{K}$-valued functions in $\Pi$. Then $\Pi_{\mathcal{K}}$ is an absolutely irreducible depth-zero $\mathcal{K}$-representation of $D_E^\times$, and it is a $\mathcal{K}$-rational structure of the $\ell$-adic representation $\Pi$.

\section{Tate cohomology of depth-zero representations of $D^\times$}
In this section, we begin by recalling the notion of Tate cohomology groups. Then we prove our main theorem (Theorem \ref{intro_thm}).
\subsection{Tate cohomology}
Let $\Gamma$ be a finite cyclic group of prime order $\ell$. Let $\sigma$ be a generator of $\Gamma$, and set ${\rm Nr}={\rm id}+\sigma+\cdots+\sigma^{\ell-1}$. Let $M$ be an $\mathcal{O}_{\mathcal{K}}[\Gamma]$-module. Then the Tate cohomology groups of $M$ under the action of $\Gamma$, are defined as follows.
$$ \widehat{\rm H}^0(\sigma,M) := \dfrac{{\rm Ker}({\rm id}-\sigma)}{{\rm Im}({\rm Nr})}, \,\,\, \widehat{\rm H}^1(\sigma,M) := \dfrac{{\rm Ker}({\rm Nr})}{{\rm Im}({\rm id}-\sigma)}. $$
Each Tate cohomology group $\widehat{\rm H}^i(\sigma,M)$ is a vector space over $\overline{\mathbb{F}}_\ell$. If $G$ is a group with $\Gamma\subseteq {\rm Aut}(G)$ an automorphism group of order $\ell$, and $M$ is an $\mathcal{O}_{\mathcal{K}}[G\rtimes \Gamma]$-module, then each $\widehat{\rm H}^i(\sigma,M)$ carries an action of the fixed-point subgroup $G^\sigma$, giving two $\ell$-modular representations of $G^\sigma$. We now recall a result due to Treumann--Venkatesh (\cite[Proposition 3.3.1]{TV_paper}), which gives the compatibility of Tate cohomology groups with compact induction. This result is crucial in deducing our main theorem. 
\begin{proposition}\label{TV_isom}
Let $G$ be a group and $\sigma$ be an automorphism of $G$ of prime order $\ell$. Let $H$ be a subgroup of $G$ such that $\sigma(H) = H$. Let $V$ be an $\mathcal{O}_{\mathcal{K}}[H\rtimes \langle\sigma\rangle]$-module. Then the restriction map 
$$ \big({\rm ind}_H^G(v)\big)^\sigma \rightarrow {\rm ind}_{H^\sigma}^{G^\sigma}(\widehat{\rm H}^i(\sigma, V)) $$
$$ \varphi\mapsto \varphi|_{G^\sigma} $$
induces an isomorphism of $G^\sigma$-representations
$$ \widehat{\rm H}^i\big(\sigma, {\rm ind}_H^G(V)\big) \simeq 
{\rm ind}_{H^\sigma}^{G^\sigma}(\widehat{\rm H}^i(\sigma,V)). $$ 
\end{proposition}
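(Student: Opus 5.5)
The plan is to decompose ${\rm ind}_H^G(V)$ according to double cosets $H\backslash G$ modulo the $\sigma$-action, and compute Tate cohomology orbit by orbit. The group $\langle\sigma\rangle$ acts on the coset space $H\backslash G$, since $\sigma(H)=H$. As an $\mathcal{O}_{\mathcal{K}}[\langle\sigma\rangle]$-module, ${\rm ind}_H^G(V)$ is the direct sum, over the $\sigma$-orbits $\mathcal{O}$ in $H\backslash G$, of the submodules $M_{\mathcal{O}}$ of functions supported on $\bigcup_{Hg\in\mathcal{O}}Hg$. Compact support means we only take finitely supported sums, so this is a genuine direct sum. Tate cohomology commutes with arbitrary direct sums, because kernels and images of ${\rm id}-\sigma$ and ${\rm Nr}$ are computed componentwise. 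Since $\ell$ is prime, each orbit has size $1$ or $\ell$.

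\textbf{Free orbits.} For an orbit of size $\ell$, $M_{\mathcal{O}}\simeq {\rm Ind}_{1}^{\langle\sigma\rangle}(W)$, where $W$ is the space of functions supported on one coset $Hg$. Indeed, $\sigma$ permutes the $\ell$ summands cyclically. Such a module is induced from the trivial subgroup, so both $\widehat{\rm H}^0$ and $\widehat{\rm H}^1$ vanish. Concretely, the standard contracting homotopy shows that $\ker({\rm Nr})={\rm Im}({\rm id}-\sigma)$ and $\ker({\rm id}-\sigma)={\rm Im}({\rm Nr})$.

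\textbf{Fixed cosets.} Now take an orbit of size one, i.e.\ a coset $Hg$ with $Hg^\sigma=Hg$. Here I would use the nonabelian cohomology step: the fixed points of $\sigma$ on $H\backslash G$ should correspond to $H^\sigma\backslash G^\sigma$. More precisely, each such fixed coset contains a $\sigma$-fixed element, and the map $H^\sigma\backslash G^\sigma\to (H\backslash G)^\sigma$ is a bijection. Injectivity is clear. For surjectivity, $g^{-1}\sigma(g)$-type cocycles, i.e.\ $h=\sigma(g)g^{-1}\in H$ with $h\sigma(h)\cdots\sigma^{\ell-1}(h)=1$, must be shown to be coboundaries in $H$. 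This is the main obstacle. In the general form stated, one needs $H^1(\langle\sigma\rangle,H)$ to be trivial. For groups where $H$ is pro-$p$-by-something with $p\ne\ell$, or by Lang-type arguments, this holds. In the application ($H=J_E$, $G=D_E^\times$) I would verify it directly using that $H$ is an extension of pro-$p$ and the torus-like group $B_E^\times$ with trivial relevant cohomology. Alternatively, I would interpret ${\rm ind}_{H^\sigma}^{G^\sigma}$ as summing over all fixed cosets and allowing extra contributions. I would first try to reduce to the fixed-point-element statement.

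\textbf{Restriction and conclusion.} Given a fixed coset $Hg$ with $g\in G^\sigma$, the module $M_{\{Hg\}}$ is isomorphic to $V$ as a $\langle\sigma\rangle$-module via $\varphi\mapsto\varphi(g)$, using $\varphi(\sigma g)=\sigma\varphi(g)$. Hence its Tate cohomology is $\widehat{\rm H}^i(\sigma,V)$. Summing over $H^\sigma\backslash G^\sigma$, one gets exactly the finitely supported functions $G^\sigma\to\widehat{\rm H}^i(\sigma,V)$ that are left $H^\sigma$-equivariant. This identification is precisely the map given by restricting $\varphi$ to $G^\sigma$ and reducing its values. Equivariance for the $G^\sigma$-action by right translation is immediate, since right translation by $G^\sigma$ commutes with $\sigma$ and with the restriction map. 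Well-definedness on classes follows because restriction sends ${\rm Im}({\rm Nr})$ and ${\rm Im}({\rm id}-\sigma)$ pointwise into the corresponding images in $V$.
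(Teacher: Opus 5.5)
Your argument is correct and is the standard proof behind the Treumann--Venkatesh result that the paper simply cites: free $\sigma$-orbits in $H\backslash G$ contribute nothing, and each $\sigma$-fixed coset with a $\sigma$-fixed representative contributes one copy of $\widehat{\rm H}^i(\sigma,V)$. You are also right that the statement as printed needs an extra hypothesis, namely that every $\sigma$-fixed coset contains a $\sigma$-fixed element, i.e.\ $(H\backslash G)^\sigma=H^\sigma\backslash G^\sigma$. This is equivalent to the kernel of ${\rm H}^1(\sigma,H)\to{\rm H}^1(\sigma,G)$ being trivial, which ${\rm H}^1(\sigma,H)=1$ guarantees. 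Without it the statement fails: take $G=(\mathbb{Z}/\ell)^2$, $\sigma(a,b)=(a,a+b)$, $H=0\times\mathbb{Z}/\ell$ and $V=\mathcal{O}_{\mathcal{K}}$ trivial; then the left side is $\overline{\mathbb{F}}_\ell^{\,\ell}$ and the right side is $\overline{\mathbb{F}}_\ell$.

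The paper supplies exactly this hypothesis where it uses the proposition. It proves ${\rm H}^1(\sigma,J_E)$ trivial via the pro-$p$ group $U^1_{D_E}$ and Lemma~\ref{cohom_triv} to obtain \eqref{coset_equal}, just as you proposed. So your fallback of ``allowing extra contributions'' is not needed; in any case it would yield inductions of Tate cohomology of $\sigma$-twisted forms of $V$, not of $\widehat{\rm H}^i(\sigma,V)$.
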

\subsection{}
Let $\pi$ be an absolutely irreducible, depth-zero, integral, $\mathcal{K}$-representation of $D^\times$. Then $\pi = \pi_D(\theta)$ for some admissible tame pair $(K/F,\theta)$, where the character $\theta$ of $K^\times$ takes values in $\mathcal{O}_{\mathcal{K}}$. Recall that the base change lifting ${\rm bc}_{E/F}(\pi)$ is of the form $\pi_{D_E}(\widetilde{\theta})$ for the admissible tame pair $(EK/E,\widetilde{\theta})$, where $\widetilde{\theta}$ is the composition $\theta\circ {\rm Nr}_{EK/K}$. We denote by $\Pi$ the $D_E^\times$-representation $\pi_{D_E}(\widetilde{\theta})$. Note that $\Pi\simeq \Pi^\sigma$ for all $\sigma\in {\rm Gal}(E/F)$.
\begin{lemma}
The representation $\Pi$ extends uniquely to a representation of $D_E^\times\rtimes {\rm Gal}(E/F)$.
\end{lemma}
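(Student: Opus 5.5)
We construct the extension on the compact induction model and then deduce uniqueness from Schur's lemma together with the arithmetic of $\mathcal{K}$. Realize $\Pi=\pi_{D_E}(\widetilde{\theta})$ as ${\rm ind}_{J_E}^{D_E^\times}(\Lambda_E)$ on $\mathcal{K}$-valued functions. The first step is to check that $\Lambda_E\circ\sigma=\Lambda_E$ on $J_E$.

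\begin{itemize}
\item $\sigma$ preserves $EK$, $B_E=B\otimes_K EK$ and $U^1_{D_E}$, so $\sigma(J_E)=J_E$.
\item For $b\in B_E^\times$ we have ${\rm Nrd}_{B_E}(\sigma(b))=\sigma({\rm Nrd}_{B_E}(b))$, since $\sigma$ is a $K$-semilinear automorphism of $B_E$ over the Galois extension $EK/K$.
\item ${\rm Nr}_{EK/K}\circ\sigma={\rm Nr}_{EK/K}$.
\end{itemize}

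Hence $\widetilde{\theta}\circ{\rm Nrd}_{B_E}\circ\sigma=\widetilde{\theta}\circ{\rm Nrd}_{B_E}$, and $\Lambda_E$ is trivial on $U^1_{D_E}$ in any case. So $\Lambda_E$ extends to a character of $J_E\rtimes{\rm Gal}(E/F)$ by letting $\sigma$ act trivially.

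\textbf{Existence.} Define $(\sigma\cdot f)(g):=f(\sigma^{-1}(g))$ for $f\in{\rm ind}_{J_E}^{D_E^\times}(\Lambda_E)$. Routine checks show:
\begin{itemize}
\item $\sigma\cdot f$ again lies in the induced space, using the invariance of $\Lambda_E$ and $\sigma(J_E)=J_E$;
\item compact support modulo $J_E$ is preserved;
\item $\sigma^\ell$ acts trivially;
\item $\sigma\,\Pi(g)\,\sigma^{-1}=\Pi(\sigma(g))$.
\end{itemize}
This gives a representation of $D_E^\times\rtimes{\rm Gal}(E/F)$ extending $\Pi$. It is defined over $\mathcal{K}$ and preserves the lattice of $\mathcal{O}_{\mathcal{K}}$-valued functions, which is what the later application of Proposition \ref{TV_isom} needs.

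\textbf{Uniqueness.} Let $\widetilde\Pi_1,\widetilde\Pi_2$ be two extensions. Then $T=\widetilde\Pi_1(\sigma)\widetilde\Pi_2(\sigma)^{-1}$ commutes with $\Pi(D_E^\times)$. Since $\Pi$ is absolutely irreducible and admissible, Schur's lemma (for countable-dimensional smooth representations) gives $T=c\in\mathcal{K}^\times$.

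Because $\widetilde\Pi_1(\sigma)$ and $\widetilde\Pi_2(\sigma)$ commute with the scalar $c$, raising to the $\ell$-th power gives $1=\widetilde\Pi_1(\sigma)^\ell=c^\ell\,\widetilde\Pi_2(\sigma)^\ell=c^\ell$. Thus $c$ is an $\ell$-th root of unity in $\mathcal{K}$. But $\mathcal{K}$ is unramified over $\mathbb{Q}_\ell$, while $\mathbb{Q}_\ell(\zeta_\ell)/\mathbb{Q}_\ell$ is totally ramified of degree $\ell-1$. So for $\ell$ odd we get $\mu_\ell(\mathcal{K})=1$, hence $c=1$ and the extension is unique.

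\textbf{The case $\ell=2$ (main obstacle).} Here $c=\pm1$ is a genuine ambiguity: the two extensions differ by the sign character of ${\rm Gal}(E/F)$. To pin $c$ down, I would use that the space of $\Lambda_E$-isotypic vectors in $\Pi|_{J_E}$ is one-dimensional. This should follow from a Mackey/intertwining computation: depth-zero types for $D_E^\times$ intertwine only on $J_E$. That line is spanned by the function supported on $J_E$ with value $1$ at the identity. It is $\sigma$-stable, and $\sigma$ acts on it by a scalar. The phrase ``the extension'' is then to be read as the one in which $\sigma$ fixes this canonical vector, equivalently extends $\Lambda_E$ trivially. This is the extension constructed above, and it is the unique one with this property. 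I expect this normalization step, and the one-dimensionality claim behind it, to be the main point needing care.
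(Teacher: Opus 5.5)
Your proof is correct and follows the paper's proof. You check $\Lambda_E\circ\sigma=\Lambda_E$ (directly from the $\sigma$-equivariance of ${\rm Nrd}_{B_E}$ and ${\rm Nr}_{EK/K}\circ\sigma={\rm Nr}_{EK/K}$, where the paper goes through the corestriction identity). You use the same action $(\sigma\cdot f)(g)=f(\sigma^{-1}(g))$ and the same Schur-lemma-plus-roots-of-unity argument for uniqueness.

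Where you depart from the paper, you are right and the paper is not. The paper concludes $c^\ell=1\Rightarrow c=1$ ``as $\mathcal{K}$ is the maximal unramified extension of $\mathbb{Q}_\ell$.'' This fails for $\ell=2$, which the hypotheses allow when $p$ and $d$ are odd, because $-1\in\mathbb{Q}_2\subset\mathcal{K}$. The sign-twisted extension is not isomorphic to the constructed one: an intertwiner would be a scalar anticommuting with $T_\sigma$, hence zero. So the lemma as literally stated is false for $\ell=2$, and a normalization is required.

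Your normalization is the right one, and it matters. For $\ell=2$, replacing $\sigma$ by $-\sigma$ interchanges ${\rm Ker}({\rm id}-\sigma)/{\rm Im}({\rm Nr})$ and ${\rm Ker}({\rm Nr})/{\rm Im}({\rm id}-\sigma)$. With the other extension, Theorem \ref{main_thm} would therefore hold with $\widehat{\rm H}^0$ and $\widehat{\rm H}^1$ swapped.

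The step you flag as needing care is immediate. Frobenius reciprocity for compact induction from the open subgroup $J_E$ gives ${\rm Hom}_{J_E}(\Lambda_E,\Pi|_{J_E})\cong{\rm End}_{D_E^\times}(\Pi)=\mathcal{K}$, so the $\Lambda_E$-isotypic line is one-dimensional and canonical. To pin down $c$ you do not even need this: the constructed action fixes the function $f_0$ supported on $J_E$ with $f_0(1)=1$, while its sign twist sends $f_0$ to $-f_0$.
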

\begin{proof}
Recall that $\Pi = {\rm ind}_{J_E}^{D_E^\times}(\Lambda_E)$, where $J_E = B_E^\times U_{D_E}^1$ and $\Lambda_E$ is the character of $J_E$ given by 
$$ \Lambda_E(bu) = \widetilde{\theta}({\rm Nrd}_{B_E}(b)). $$
Note that \begin{align*}
\Lambda_E^\sigma(bu) 
&= \Lambda_E(\sigma(b)\sigma(u))\\
&=\widetilde{\theta}({\rm Nrd}_{B_E}(\sigma(b)))\\
&=\theta\big(({\rm Nr}_{EK/K}\circ {\rm Nrd}_{B_E})(\sigma(b))\big)
\end{align*}
Using the relation ${\rm Nr}_{EK/K}\circ{\rm Nrd}_{B_E} = {\rm Nrd}_B\circ {\rm Cor}_{E/F}$, we get
\begin{align*}
\Lambda_E^\sigma(bu)
&=\theta\big(({\rm Nrd}_B\circ {\rm Cor}_{E/F})(\sigma(b))\big)\\
&=\theta\big(({\rm Nrd}_B\circ {\rm Cor}_{E/F})(b)\big)\\
&=\widetilde{\theta}({\rm Nrd}_{B_E}(b))\\
&=\Lambda_E(bu).
\end{align*}
Thus, the character $\Lambda_E$ is ${\rm Gal}(E/F)$-equivariant and hence ${\rm Gal}(E/F)$ acts trivially on $\Lambda_E$. This defines an action of ${\rm Gal}(E/F)$ on $\Pi$, given by
$$ (\sigma\cdot f)(g) = f(\sigma^{-1}(g)) $$
for all $g\in D_E^\times$. 

Suppose that there exist two actions of ${\rm Gal}(E/F)$ on the $\mathcal{K}[D_E^\times]$-module $\Pi$ given by $T_\sigma$ and $T_\sigma'$. Then 
$T_\sigma = cT_\sigma'$ for some $c\in \mathcal{K}$. Since $T_\sigma^\ell = T_\sigma'^\ell = {\rm id}$, we get that $c^\ell =1$, which implies that $c=1$ as $\mathcal{K}$ is the maximal unramified extension of $\mathbb{Q}_\ell$. This completes the proof. 
\end{proof}
\subsubsection{}
Consider the long exact sequence of non-abelian cohomology
$$ 0\longrightarrow J\longrightarrow D^\times\longrightarrow (J_E\backslash D_E^\times)^\sigma\longrightarrow {\rm H}^1(\sigma, J_E). $$
We also have the following exact sequence
\begin{equation}\label{exact_seq_1}
{\rm H}^1(\sigma,U_{D_E}^1)\longrightarrow {\rm H}^1(\sigma,J_E)\longrightarrow {\rm H}^1(\sigma,B_E^\times)
\end{equation}
Since $U_{D_E}^1$ is a pro-$p$-group and $\ell\ne p$, the cohomology set ${\rm H}^1(\sigma, U_{D_E}^1)$ is trivial. Next, we recall a general result which shows that ${\rm H}^1(\sigma,B_E^\times)$ is trivial. 
\begin{lemma}\label{cohom_triv}
Let $M$ be a finite dimensional unitary $F$-algebra. Let $M_E = M\otimes_F E$. Then ${\rm H}^1(\sigma,M_E^\times)$ is trivial.
\end{lemma}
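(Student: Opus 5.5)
This is Hilbert's Theorem 90 for the finite-dimensional $F$-algebra $M$, proved via Galois descent for modules. Here $\sigma$ acts on $M_E = M\otimes_F E$ through the second factor, and ${\rm H}^1(\sigma,M_E^\times)$ is the nonabelian cohomology set of $\Gamma={\rm Gal}(E/F)$ with values in $M_E^\times$.

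First, a $1$-cocycle $a:\Gamma\to M_E^\times$, $\gamma\mapsto a_\gamma$, satisfying $a_{\gamma\delta}=a_\gamma\,\gamma(a_\delta)$, defines a twisted semilinear action of $\Gamma$ on the $E$-vector space $M_E$:
$$ \gamma\ast x := a_\gamma\,\gamma(x),\qquad x\in M_E. $$
The cocycle condition makes this a genuine action, and it is $\gamma$-semilinear over $E$. Since $M$ acts on $M_E$ by right multiplication and $a_\gamma$ acts on the left, the twisted action commutes with the right $M$-action. So $(M_E,\ast)$ is a right $M_E$-module equipped with a semilinear $\Gamma$-action compatible with the $\Gamma$-action on $M_E$.

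Next, apply Galois descent (Speiser's lemma): for a finite Galois extension $E/F$, the functor $N\mapsto N\otimes_F E$ is an equivalence between $F$-vector spaces and $E$-vector spaces with semilinear $\Gamma$-action. It is compatible with any additional structure, here the right $M$-module structure. Hence the fixed points $N:=M_E^{\ast\Gamma}$ form a right $M$-module with $N\otimes_F E\simeq M_E$ as right $M_E$-modules. The standard proof of this step is to show the averaging maps $x\mapsto\sum_\gamma \gamma\ast(cx)$, $c\in E$, span $M_E$, using Dedekind independence of characters.

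The main step is to conclude that $N\simeq M$ as right $M$-modules; I expect this to be the main obstacle. We know $N\otimes_F E\simeq M\otimes_F E$ as $M_E$-modules, equivalently as $M$-modules. As $M$-modules, $N\otimes_F E\simeq N^{\oplus[E:F]}$ and similarly for $M$. Finitely generated modules over the finite-dimensional (hence Artinian) algebra $M$ satisfy Krull--Schmidt, so we may cancel and get $N\simeq M$.

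Finally, choose an $M$-module isomorphism $M\to N$ and let $b\in N\subseteq M_E$ be the image of $1$. Because $b\otimes 1$ generates $N\otimes E=M_E$ as a right $M_E$-module, $bM_E=M_E$. Right multiplication by $b$ is then a surjective, hence injective, $E$-linear endomorphism of the finite-dimensional space $M_E$, so $b\in M_E^\times$. Invariance of $b$ gives $a_\gamma\gamma(b)=b$, that is, $a_\gamma=b\,\gamma(b)^{-1}$. So $a$ is a coboundary, and ${\rm H}^1(\sigma,M_E^\times)$ is trivial.

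In the application in the paper, $M=B$ and $M_E=B_E$; this makes ${\rm H}^1(\sigma,B_E^\times)$ trivial, which together with \eqref{exact_seq_1} gives ${\rm H}^1(\sigma,J_E)=1$.
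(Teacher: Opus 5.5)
Your proof is correct, but it takes a different route from the paper's. The paper works directly with the Poincar\'e series $b=\sum_{\gamma\in\Gamma}f(\gamma)\gamma(c)$, $c\in M_E$, which automatically satisfies $f(\gamma)=b\,\gamma(b)^{-1}$. It then invokes the algebraic independence of the elements of $\Gamma$ over the infinite field $F$ (Bourbaki) to choose $c$ with $b\in M_E^\times$. The justification, which the paper leaves implicit, is that $c\mapsto\det(x\mapsto bx)$ is a polynomial in the conjugates $\gamma(c)$ that is not formally zero (specialize the variables attached to $\gamma=1$ to the coordinates of $1$ and all others to $0$), so it is nonzero at some $c$.

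The paper's $b$ is exactly $\sum_\gamma \gamma\ast c$, the averaging of $c$ under your twisted action. So both arguments look for an invertible element among the $\ast$-invariants, and they differ only in how they produce one. You obtain it structurally. Speiser's lemma identifies the invariants $N$ as an $F$-form of the right $M_E$-module $M_E$. Krull--Schmidt cancellation in $N^{\oplus\ell}\simeq M^{\oplus\ell}$ (a Noether--Deuring type argument) then shows that $N$ is free of rank one over $M$, and its generator is a unit.

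What each route buys:
\begin{itemize}
\item The paper's route is shorter and needs no module theory, but it genuinely uses that $F$ is infinite together with a nontrivial independence theorem.
\item Your route uses more machinery, but it works over any base field and any finite Galois extension. It also makes transparent that ${\rm H}^1(\Gamma,M_E^\times)$ classifies $E/F$-forms of $M$ as a right $M$-module.
\end{itemize}

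One small slip: the map with image $bM_E$ is left multiplication $x\mapsto bx$, not right multiplication. Its bijectivity gives $y$ with $by=1$; then $b(yb-1)=0$ and injectivity force $yb=1$, so $b\in M_E^\times$ as you claim.
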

\begin{proof}
Let $\Gamma = {\rm Gal}(E/F)$. Let $f:\Gamma\rightarrow M_E^\times$ be a $1$-cocycle (i.e., $f(\sigma\tau)=f(\sigma)\sigma(f(\tau))$) and $c\in M_E$. Consider the Poincar\'e series
$$ b= \sum_{\sigma\in\Gamma}f(\sigma)\sigma(c). $$
Note that $\sigma(b) = f(\sigma)^{-1} b$. Since $F$ is infinite, it follows from the algebraic independence of automorphisms \cite[Chapter 5, Section 10, Theorem 4]{bourbaki2003algebra} that one can choose $c$ in such a way that $b$ is invertible in $M_E$. This shows that $f$ is a coboundary. Hence, the lemma.
\end{proof}
Since $B$ is a finite-dimensional unital $K$-algebra, the cohomology set ${\rm H}^1(\sigma, B_E^\times)$ is also trivial (by Lemma \ref{cohom_triv}). Then it follows from the exact sequence (\ref{exact_seq_1}) that ${\rm H}^1(\sigma,J_E)$ is trivial. Hence, we have the following equality of sets
\begin{equation}\label{coset_equal}
(J_E\backslash D_E^\times)^\sigma = 
J\backslash D^\times.
\end{equation}
\subsubsection{}
Let $\mathcal{L}_{\Pi}$ denote the space of $\mathcal{O}_{\mathcal{K}}$-valued functions in $\Pi$. Then $\mathcal{L}_{\Pi}$ is an $\mathcal{O}_{\mathcal{K}}[D_E^\times]$-stable lattice in $\Pi$ (\cite[Proposition II.3]{Vigneras_Highest_Whittaker_model}). Also note that $\mathcal{L}_{\Pi}$ is stable under the action of ${\rm Gal}(E/F)$. Then we have the following result.
\begin{theorem}\label{main_thm}
Let $\pi$ be an absolutely irreducible depth-zero integral $\mathcal{K}$-representation of $D^\times$. Let $\Pi = {\rm bc}_{E/F}(\pi)$. Then The Tate cohomology group $\widehat{\rm H}^0(\sigma, \mathcal{L}_{\Pi})$ is isomorphic to the Frobenius twist $r_\ell(\pi)^{(\ell)}$ as a representation of $D^\times$. Moreover, the first Tate cohomology group $\widehat{\rm H}^1(\sigma,\mathcal{L}_{\Pi})$ is trivial.
\end{theorem}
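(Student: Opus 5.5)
Write $\mathcal{L}_\Pi = {\rm ind}_{J_E}^{D_E^\times}(\mathcal{O}_{\mathcal{K}}(\Lambda_E))$, where $\mathcal{O}_{\mathcal{K}}(\Lambda_E)$ is the rank-one lattice on which $J_E$ acts by $\Lambda_E$. The first lemma of this subsection shows $\Lambda_E$ is $\sigma$-invariant, so $\sigma$ acts on this line. The plan is to apply Proposition \ref{TV_isom} with $G=D_E^\times$, $H=J_E$, $V=\mathcal{O}_{\mathcal{K}}(\Lambda_E)$. This gives
$$ \widehat{\rm H}^i(\sigma,\mathcal{L}_\Pi)\simeq {\rm ind}_{J}^{D^\times}\big(\widehat{\rm H}^i(\sigma,V)\big), $$
using $J_E^\sigma=J$ and $(D_E^\times)^\sigma=D^\times$. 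The equality of double coset spaces (\ref{coset_equal}) is what makes the restriction map in Proposition \ref{TV_isom} well behaved: the $\sigma$-fixed cosets in $J_E\backslash D_E^\times$ are exactly those of $J\backslash D^\times$. I would recheck this point to be safe, since the support of a $\sigma$-invariant function decomposes into fixed cosets and free $\sigma$-orbits, and free orbits contribute only norms.

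Next, compute the Tate cohomology of the line $V$. We need to know that $\sigma$ acts on $V$ trivially, not by some nontrivial $\ell$-th root of unity. This follows because the chosen extension is $(\sigma f)(g)=f(\sigma^{-1}g)$, which acts trivially on the value at the identity; it is also consistent with the uniqueness lemma. Then ${\rm Nr}=\ell$ on $V$ and ${\rm id}-\sigma=0$. Hence
$$\widehat{\rm H}^0(\sigma,V)=V/\ell V\simeq\overline{\mathbb{F}}_\ell, \qquad \widehat{\rm H}^1(\sigma,V)=\ker(\ell)/0=0,$$
since $V$ is torsion free. This immediately gives $\widehat{\rm H}^1(\sigma,\mathcal{L}_\Pi)=0$.

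For $\widehat{\rm H}^0$, we must identify the character by which $J$ acts on $V/\ell V$. For $b\in B^\times\subset B_E^\times$, we have ${\rm Nrd}_{B_E}(b)={\rm Nrd}_B(b)\in K^\times$. Hence ${\rm Nr}_{EK/K}$ of it equals ${\rm Nrd}_B(b)^\ell$, so $\Lambda_E|_J=\Lambda^\ell$, where $\Lambda=\theta\circ{\rm Nrd}_B$ on $J=B^\times U_D^1$; on $U_D^1$ both are trivial. Reducing mod $\ell$, the character $\overline{\Lambda}^{\,\ell}$ on $\overline{\mathbb{F}}_\ell$ is exactly the Frobenius twist $\overline{\Lambda}^{(\ell)}$. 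Indeed, the line $\overline{\mathbb{F}}_\ell$ with $g$ acting by $\overline\Lambda(g)^\ell$ is isomorphic, via $c\mapsto c^{1/\ell}$, to the twisted line where $g$ acts by $\overline\Lambda(g)$ with the scalar structure $c\ast v=c^{1/\ell}v$.

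Therefore
$$\widehat{\rm H}^0(\sigma,\mathcal{L}_\Pi)\simeq{\rm ind}_J^{D^\times}(\overline{\Lambda}^{(\ell)})\simeq\big({\rm ind}_J^{D^\times}\overline\Lambda\big)^{(\ell)},$$
since compact induction commutes with the Frobenius twist, the twist being only a change of scalar structure. Finally, ${\rm ind}_J^{D^\times}(\overline\Lambda)=\mathcal{L}_\pi\otimes\overline{\mathbb{F}}_\ell$, where $\mathcal{L}_\pi$ is the induced $\mathcal{O}_{\mathcal{K}}$-valued lattice in $\pi$. Its semisimplification is $r_\ell(\pi)$.

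The main obstacle is this last identification. The reduction ${\rm ind}_J^{D^\times}(\overline\Lambda)$ is finite-dimensional, because $J$ is open and of finite index modulo the center. It need not be semisimple, so "isomorphic to $r_\ell(\pi)^{(\ell)}$" must be read as the reduction of the natural lattice, with $r_\ell$ understood up to semisimplification. I would argue that $J\supset F^\times\mathfrak{o}_D^\times$ and $D^\times/F^\times\mathfrak{o}_D^\times$ is cyclic of order $d$. Since $\ell\nmid d$, ${\rm ind}_J^{D^\times}$ of a character is semisimple mod $\ell$ by Maschke's theorem applied to this finite quotient. Hence the reduction is already semisimple and equals $r_\ell(\pi)$.
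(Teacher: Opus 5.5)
Your proposal is correct and follows the paper's proof. You apply the Treumann--Venkatesh isomorphism with $H=J_E$, using $(J_E\backslash D_E^\times)^\sigma=J\backslash D^\times$; you compute $\widehat{\rm H}^0(\sigma,\Lambda_E)=\overline{\Lambda}_E|_J=\overline{\Lambda}^{\ell}$ and $\widehat{\rm H}^1(\sigma,\Lambda_E)=0$; and you identify $\overline{\Lambda}^{\ell}$ with the Frobenius twist. Your extra check that ${\rm ind}_J^{D^\times}\overline{\Lambda}$ is already semisimple, which the paper leaves implicit, is sound once you phrase it as the relative Maschke argument: $J\supseteq\mathfrak{o}_D^\times$ is normal of index $[K:F]$, prime to $\ell$, and the restriction to $J$ is a sum of characters. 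Literal Maschke for $D^\times/F^\times\mathfrak{o}_D^\times$ does not apply, because the action does not factor through that quotient.
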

\begin{proof}
Recall that $\pi = {\rm ind}_{J}^{D^\times}(\Lambda)$ and $\Pi = {\rm ind}_{J_E}^{D_E^\times}(\Lambda_E)$, where $\Lambda$ and $\Lambda_E$ are the characters of $J$ and $J_E$, defined as 
$$ \Lambda(bu) = \theta({\rm Nrd}_B(b)),\,\,\, b\in B^\times, u\in U_D^1 $$
and 
$$ \Lambda_E(b'u') = \widetilde{\theta}({\rm Nrd}_{B_E}(b')),\,\,\,\, b'\in B_E^\times, u'\in U_{D_E}^1, $$
respectively. Note that $\Lambda_E^\sigma = \Lambda_E$.
Applying Treumann--Venkatesh isomorphism, we get the following $D^\times$-isomorphism 
\begin{equation}\label{TV_isom_1}
\widehat{\rm H}^i(\sigma, \mathcal{L}_{\Pi}) \simeq {\rm ind}_{J}^{D^\times}(\widehat{\rm H}^i(\sigma,\Lambda_E)) 
\end{equation}
for each $i\in \{0,1\}$. Note that
$\widehat{\rm H}^0(\sigma,\Lambda_E) = \overline{\Lambda}_E|_{J} = \overline{\Lambda}^\ell$ and $\widehat{\rm H}^1(\sigma,\Lambda_E) = 0$, where $\overline{\Lambda}$ and $\overline{\Lambda}_E$ are the mod-$\ell$ reductions of $\Lambda$ and $\Lambda_E$, respectively. Then it follows from (\ref{TV_isom_1}) that 
$$ \widehat{\rm H}^0(\sigma,\mathcal{L}_{\Pi})\simeq r_\ell(\pi)^{(\ell)} $$
as representations of $D^\times$, and the first Tate cohomology group $\widehat{\rm H}^1(\sigma,\mathcal{L}_{\Pi})$ is trivial. Hence, the theorem.
\end{proof}
As a corollary, we have
\begin{corollary}
Let $\pi$ be an absolutely irreducible depth-zero integral $\mathcal{K}$-representation of $D^\times$. Let $\Pi = {\rm bc}_{E/F}(\pi)$. Assume further that $\ell$ does not divide $q^d-1$. Then, for any $\mathcal{O}_{\mathcal{K}}[D_E^\times\rtimes {\rm Gal}(E/F)]$-stable lattice $\mathcal{L}$ in $\Pi$, the zeroth Tate cohomology group $\widehat{\rm H}^0(\sigma, \mathcal{L})$ is irreducible, and the first Tate cohomology group $\widehat{\rm H}^1(\sigma,\mathcal{L})$ is trivial.
\end{corollary}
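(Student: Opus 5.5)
The corollary asserts a statement about an \emph{arbitrary} $\mathcal{O}_{\mathcal{K}}[D_E^\times\rtimes{\rm Gal}(E/F)]$-stable lattice $\mathcal{L}$, whereas Theorem \ref{main_thm} only treats the induced lattice $\mathcal{L}_\Pi$. The plan is to reduce from $\mathcal{L}$ to $\mathcal{L}_\Pi$ by a scaling argument, using irreducibility of the reduction, and then read both claims off Theorem \ref{main_thm}.

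First, since $\ell\nmid q^d-1$, \cite[Corollary 3.22]{modular_type_inner} gives that $r_\ell(\pi)$ is irreducible. Since $[EK:E]$ divides $d$ and the residue field of $E$ has order $q$ or $q^\ell$, I would also check that the same hypothesis forces $r_\ell(\Pi)$ to be irreducible; alternatively, one may argue directly on the depth-zero type, since $\Lambda_E$ is a character.

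Next, I compare $\mathcal{L}$ with $\mathcal{L}_\Pi$.
\begin{enumerate}
\item \emph{Pinning $\mathcal{L}$ between multiples of $\mathcal{L}_\Pi$.} The module $\Pi$ is finitely generated over $D_E^\times$, and $\mathcal{L}_\Pi$ is generated over $\mathcal{O}_{\mathcal{K}}[D_E^\times]$ by a single vector (the function supported on $J_E$). So, after rescaling $\mathcal{L}$ by a power of $\ell$, I can arrange $\mathcal{L}_\Pi\subseteq \mathcal{L}$ with $\mathcal{L}_\Pi\not\subseteq \ell\mathcal{L}$.
\item \emph{Reduction mod $\ell$.} The image of $\mathcal{L}_\Pi$ in $\mathcal{L}/\ell\mathcal{L}$ is then a nonzero $D_E^\times$-subrepresentation.
\item \emph{Identifying $\mathcal{L}$.} Since $\mathcal{L}/\ell\mathcal{L}$ is irreducible, this image is everything, so $\mathcal{L}=\mathcal{L}_\Pi+\ell\mathcal{L}$. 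Nakayama then gives $\mathcal{L}=\mathcal{L}_\Pi$. Nakayama is legitimate here because the $U^1_{D_E}$-invariants are finite-rank $\mathcal{O}_{\mathcal{K}}$-modules and both lattices are generated by them.
\end{enumerate}
The conclusion is that every stable lattice is homothetic to $\mathcal{L}_\Pi$, and homothety is automatically compatible with the ${\rm Gal}(E/F)$-action.

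Tate cohomology is unchanged under scaling by $\ell^k$, because multiplication by $\ell^k$ is a $D_E^\times\rtimes\langle\sigma\rangle$-equivariant isomorphism of modules. Hence $\widehat{\rm H}^i(\sigma,\mathcal{L})\simeq\widehat{\rm H}^i(\sigma,\mathcal{L}_\Pi)$. By Theorem \ref{main_thm}, $\widehat{\rm H}^1$ vanishes and $\widehat{\rm H}^0\simeq r_\ell(\pi)^{(\ell)}$. This last representation is irreducible, because a Frobenius twist preserves irreducibility: subspaces of $V$ and $V^{(\ell)}$ coincide.

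The main obstacle is the first step, namely establishing irreducibility of $r_\ell(\Pi)$ (not only of $r_\ell(\pi)$) under $\ell\nmid q^d-1$.
\begin{itemize}
\item If $E/F$ is ramified, the residue field is unchanged, and \cite[Corollary 3.22]{modular_type_inner} applies directly to $D_E$.
\item If $E/F$ is unramified, the residue field becomes $\mathbb{F}_{q^\ell}$, and I would need $\ell\nmid q^{\ell d}-1$. This follows since $q^{\ell d}\equiv q^d \pmod \ell$ by Fermat.
\end{itemize}
If that step fails, the fallback is to use only irreducibility of $r_\ell(\pi)$. I would then compare the ${\rm Gal}$-fixed parts of the two lattices via Proposition \ref{TV_isom} applied to $\mathcal{L}$ restricted along $J_E$-invariant functions.
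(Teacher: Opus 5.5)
Your proposal is correct and follows essentially the same route as the paper. Both arguments use $\ell\nmid q^d-1$ to get irreducibility of $r_\ell(\Pi)$, deduce that every stable lattice is homothetic to $\mathcal{L}_\Pi$, and then read both claims off Theorem~\ref{main_thm}, using that $r_\ell(\pi)^{(\ell)}$ is irreducible. You also supply details the paper leaves implicit: the congruence $q^{\ell d}\equiv q^d \pmod{\ell}$ in the unramified case (the ramified case is in fact vacuous here, since it would force $\ell\mid q-1$), and a Nakayama argument that is valid because $\Pi$ is finite-dimensional; this makes the paper's extra step with $0\to\mathcal{L}\to\mathcal{L}\to\mathcal{L}/\ell\mathcal{L}\to 0$ unnecessary.
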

\begin{proof}
Let $\mathcal{L}$ be an $\mathcal{O}_{\mathcal{K}}[D_E^\times\rtimes{\rm Gal}(E/F)]$-stable lattice in $\Pi$. Since $\ell\nmid q^d-1$, the mod-$\ell$ reduction $r_\ell(\Pi)$ is irreducible, and therefore $\mathcal{L}$ is homothetic to the induced lattice $\mathcal{L}_{\Pi}$. Since $\widehat{\rm H}^1(\sigma,\mathcal{L}_{\Pi})$ is trivial (by Theorem \ref{main_thm}), we get that $\widehat{\rm H}^1(\sigma,\mathcal{L})$ is trivial.

Note that $\widehat{\rm H}^0(\sigma,\mathcal{L}_{\Pi})\simeq r_\ell(\pi)^{(\ell)}$ (by Theorem \ref{main_thm}). Since $\ell\nmid q^d-1$, the mod-$\ell$ reduction $r_\ell(\pi)$ is also irreducible, and hence $\widehat{\rm H}^0(\sigma,\mathcal{L}_\Pi)$ is irreducible. Consider the short exact sequence of $\mathcal{O}_{\mathcal{K}}[{\rm Gal}(E/F)]$-modules
$$ 0\longrightarrow\mathcal{L}\xrightarrow{{\rm mod}-\ell}\mathcal{L}\longrightarrow\mathcal{L}/\ell\mathcal{L}\longrightarrow 0$$
Since $\widehat{\rm H}^1(\sigma,\mathcal{L}) =0$, it follows from the long exact sequence of Tate cohomology corresponding to the above short exact sequence that
$$ \widehat{\rm H}^0(\sigma,\mathcal{L})\simeq \widehat{\rm H}^0(\sigma,\mathcal{L}/\ell\mathcal{L}).$$
Since $\mathcal{L}/\ell\mathcal{L}$ is irreducible, the Tate cohomology $\widehat{\rm H}^0(\sigma,\mathcal{L}/\ell\mathcal{L})$ is independent of the choice of $\mathcal{L}$, and so is $\widehat{\rm H}^0(\sigma,\mathcal{L})$. Thus, $\widehat{\rm H}^0(\sigma,\mathcal{L})$ is irreducible.
\end{proof}

\bibliographystyle{amsalpha}
\bibliography{Tate_inner}
\vspace{0.2 cm}
Sabyasachi Dhar,\\
Department of Mathematics, Indian Institute of Technology Bombay, Mumbai, Maharashtra-400076, India.\\
\texttt{mathsabya93@gmail.com},
\texttt{sabya@math.iitb.ac.in}

\end{document}